\definecolor{bfonce}{rgb}{0.,0.,0.8}	
\definecolor{bclair}{rgb}{0.87,0.92,1.}
\definecolor{orangec}{rgb}{1.,0.6,0.}
\newcommand{\edge}{\sigma}
\newcommand{\edges}{{\mathcal E}}
\newcommand{\edgesint}{{\mathcal E}_{{\rm int}}}
\newcommand{\edgesext}{{\mathcal E}_{{\rm ext}}}
\newcommand{\mesh}{{\mathcal M}}
\newcommand{\tdisc}{{\mathcal T}}
\newcommand{\bfb}{{\boldsymbol b}}
\newcommand{\bfF}{{\boldsymbol F}}
\newcommand{\bfn}{{\boldsymbol n}}
\newcommand{\bfx}{{\boldsymbol x}}
\newcommand{\bfy}{{\boldsymbol y}}
\newcommand{\bfpsi}{{\boldsymbol \psi}}
\newcommand{\charac}{1 \!\!1}
\newcommand{\gradi}{{\boldsymbol \nabla}}
\newcommand{\dive}{{\rm div}}
\newcommand{\dt}{\,\mathrm{d}t}
\newcommand{\dx}{\,\mathrm{d}\bfx}
\newcommand{\dy}{\,\mathrm{d}\bfy}
\newcommand{\xN}{\mathbb{N}}
\newcommand{\xR}{\mathbb{R}}
\newcommand{\exm}{^{(m)}}
\newcommand{\ie}{{\it i.e.}}
\newcommand{\bli}{\begin{list}{-}{\itemsep=1ex \topsep=1ex \leftmargin=0.5cm \labelwidth=0.3cm \labelsep=0.2cm \itemindent=0.cm}}
\newtheorem{remark}{Remark}[section]
\title{On the weak consistency of finite volumes schemes for conservation laws on general meshes}
\author{
T. Gallou\"et \thanks{Universit\'e d'Aix-Marseille ({\tt thierry.gallouet@univ-amu.fr})} \and
R. Herbin \thanks{Universit\'e d'Aix-Marseille ({\tt raphaele.herbin@univ-amu.fr})} \and
J.-C. Latch\'e \thanks{Institut de Radioprotection et de S\^{u}ret\'{e} Nucl\'{e}aire (IRSN), PSN-RES/SA2I, Cadarache, St-Paul-lez-Durance, 13115, France ({\tt jean-claude.latche@irsn.fr})}
}
\begin{document}
\maketitle
\begin{abstract}
The aim of this paper is to develop some tools in order to obtain the weak consistency of (in other words, an analogue of the Lax-Wendroff theorem for) finite volume schemes for balance laws in the multi-dimensional case and under minimal regularity assumptions for the mesh.
As in the seminal Lax-Wendroff paper, our approach relies on a discrete integration by parts of the weak formulation of the scheme. Doing so, a discrete gradient of the test function  appears; 
the central argument for the scheme consistency is to remark that this discrete gradient is convergent in $L^\infty$ weak $\star$.
\end{abstract}
\begin{keywords} finite volumes, consistency \end{keywords}
%
%
\section{Introduction}

In the early sixties, P.D. Lax and B. Wendroff established that, on uniform 1D grids, a flux-consistent and conservative cell-centered finite-volume scheme for a conservation law is weakly consistent, in the sense that the limit of any a.e. convergent sequence of $L^\infty$-bounded numerical solutions, obtained with a sequence of grids with mesh and time steps tending to zero, is a weak solution  of the conservation law \cite{lax-60-sys}; this result is known as the Lax-Wendroff theorem, and is reported in many textbooks with some variants:  see e.g. \cite[Section 12.10]{lev-02-fin} with a BV bound assumption on the scheme, and \cite[Theorem 21.2]{egh-00-fin} for a generalisation to non-uniform meshes.
However, convergence proofs on unstructured meshes which were obtained for nonlinear scalar conservation laws in the 90's do not use the Lax-Wendroff theorem; indeed, finite volume schemes on unstructured meshes are known to be in general not TVD (see e.g. an example in \cite{cha-92-wea}), so that a compactness property in $L^1_{loc}$ is not easy to obtain, although it does hold in fact but results from the proof of uniqueness of the so-called entropy process solution, see e.g. \cite[chapter VI]{egh-00-fin} and references therein.

Nevertheless, from a practical point of view, the Lax-Wendroff theorem, even if weaker than a full convergence proof, may be fundamental for the design of numerical schemes.
In particular, for many hyperbolic systems (especially in the multi-dimensional case), it represents the essential part of the theoretical foundations, since provable estimates on numerical solutions are too weak to provide sufficient compactness to undertake any convergence study.    

\medskip
The seminal Lax-Wendroff paper has been the starting point for several research works, aimed at relaxing the original assumptions: for instance, an extension to schemes which do not admit a local conservative formulation may be found in \cite{car-03-cons,abg-03-hig,abg-06-res,shi-18-loc}; the derivation of analogue consistency results for non-conservative hyperbolic systems is presented in \cite{fed-00-sim,mun-11-conv}.
In the multi-dimensional case and for standard finite volumes schemes, Lax-Wendroff type results have been progressively extended to general (and, in particular, unstructured) discretizations  \cite{kro-96-lax}, in \cite[Section 4.2.2]{god-96-num} for two-dimensional simplicial meshes and in \cite{ell-07-lax} for multi-dimensional meshes.

\medskip
The present paper follows the same route, in the sense that we still extend the application range of the Lax-Wendroff theorem in terms on constraints on the mesh, in particular relaxing the quasi-uniformity assumption.
However, compared to \cite{ell-07-lax}, the assumptions and the technique of proof are different; we consider general meshes with only a regularity assumption linked to the definition of a discrete gradient, but require the flux function to be Lipschitz continuous or at least ``lip-diag", while in \cite{ell-07-lax} the space-time grid is assumed quasi-uniform but the flux is only required to be continuous,  and we return to a strategy close to the original work: the scheme is multiplied by a test function and integrated over space and time, and a discrete integration by parts of the convection term yields the integral of the product of the numerical flux by a discrete gradient of the test function (this latter seems to appear first in \cite{eym-00-hconv}, where its convergence properties are shown for specific meshes and norms).
Then the passage to the limit of vanishing space and time steps in the scheme requires two ingredients:
\bli
\item a convergence result for the discrete gradient; contrary to what happens in the 1D case or for Cartesian grids, this convergence is only weak, namely in $L^\infty$ weak $\star$, which is however sufficient to conclude,
\item the control of some residual terms, which basically consists in the difference between the numerical solution and a space or time translate of this latter; the difficulty here lies in the fact that the translation amplitude is (locally) mesh-dependent (for instance, the function is translated from one cell to its neighbour), so that  standard results for converging sequences of functions in $L^1$ may no longer be applied.
\end{list}
The presentation is organized as follows: after a definition of the considered space discretizations (Section \ref{sec:mesh}), we address successively the two above-mentioned issues (Section \ref{sec:grad} and \ref{sec:kolm} respectively), carefully clarifying in these two sections the regularity requirements for the mesh.
Then we show in Section \ref{sec:lw} how to use the obtained results to obtain a weak consistency result for a standard finite volume discretization of a balance law; consistency requirements for the numerical flux appear in this step. 
%
%
\section{Space discretization}\label{sec:mesh}

Let $\Omega$ be an open bounded polyhedral set of $\xR^d$, $d \ge 1$.
A polyhedral partition $\mesh$ of $\Omega$ is a finite partition of $\Omega$ such that each element $K$ of this partition is measurable and has a boundary $\partial K$ that is composed of a finite union of part of hyperplanes (the faces of $K$) denoted by $\edge$, so that $\partial K=\cup_{\edge \in \edges_K} \edge$ where $\edges_K$ is the set of the faces of $K$.
Such a polyhedral partition is called a ``mesh''.
We denote by $\edges$ the set of all the faces, namely $\edges = \cup_{K \in \mesh} \edges_K $.
If $\edge \in \edges$ is a face of this partition, then one denotes by $|\edge|$ the $(d-1)$-Lebesgue measure of $\edge$.
We denote by $\edgesint$ the set of elements $\edge$ of $\edges$ such that there exist $K$ and $L$ in $\mesh$ ($K \ne L$) such that  $\edge \in \edges_K \cap \edges_L$; such a face $\edge$ is denoted by $\edge=K|L$.
The set of faces located on the boundary of $\Omega$, \ie\ $\edges \setminus \edgesint$, is denoted by $\edgesext$.
For $K \in \mesh$ we denote by $h_K$ the diameter of $K$.
The size of the mesh $\mesh$ is $h_\mesh = \max\{h_K, K \in \mesh\}$.
For $K\in\mesh$ and $\edge \in \edges_K$, we denote by $\bfn_{K,\edge}$ the normal vector of $\edge$ outward $K$.

\medskip
We also introduce now a dual mesh, that is a new partition of $\Omega$ indexed by the elements of $\edges$, namely $\Omega=\cup_{\edge \in \edges} D_\edge$.
For $\edge=K|L$, the set $D_\edge$ is supposed to be a subset of $K \cup L$ and we define $D_{K,\edge}=D_\edge \cap K$, so $D_\edge=D_{K,\edge} \cup D_{L,\edge}$ (see Figure \ref{fig:space_disc}).

\medskip
If $A$ is a measurable set of $\xR^d$, we denote by $|A|$ the Lebesgue measure of $A$.

\begin{figure}[h!]
\begin{center}
\scalebox{0.9}{
\begin{tikzpicture} 
\fill[color=bclair,opacity=0.3] (1.,2.) -- (4.,1.) -- (5.,3.) -- (3.,4.) -- (1.5,3.5) -- (1.,2.);
\fill[color=orangec!30!white,opacity=0.3] (5.,3.) -- (6.,5.) -- (3.,6.) -- (3.,4.);
\draw[very thick, color=black] (1.,2.) -- (4.,1.) -- (5.,3.) -- (3.,4.) -- (1.5,3.5) -- (1.,2.);
\draw[very thick, color=black] (5.,3.) -- (6.,5.) -- (3.,6.) -- (3.,4.);
\draw[very thick, color=black] (1.,2.) -- (0.,1.5);
\draw[very thick, color=black] (4.,1.) -- (4.5,0.);
\draw[very thick, color=black] (6.,5.) -- (7.,5.);
\draw[very thick, color=black] (3.,6.) -- (3.5,7.); \draw[very thick, color=black] (3.,6.) -- (2.,6.);
\draw[very thick, color=black] (1.5,3.5) -- (0.5,4.);
\node at (3., 2.5){$\mathbf K$}; \node at (4.5, 4.7){$\mathbf L$};
\draw[very thick, color=black] (5.,3.) -- (3.,4.) node[midway,sloped,above]{$\edge=K|L$};
\fill[color=bclair,opacity=0.3] (10.,4.) .. controls (10.1,3.) .. (10.5,2.6) .. controls (11.,2.6) .. (12.,3.);
\fill[color=orangec!30!white,opacity=0.3] (10.,4.) .. controls (10.8,5.) .. (11.2,4.8) .. controls (11.6,4.7) .. (12.,3.);
\draw[very thick, color=black] (8.,2.) -- (11.,1.) -- (12.,3.) -- (10.,4.) -- (8.5,3.5) -- (8.,2.);
\draw[very thick, color=black] (12.,3.) -- (13.,5.) -- (10.,6.) -- (10.,4.);
\draw[very thick, color=black] (8.,2.) -- (7.,1.5);
\draw[very thick, color=black] (11.,1.) -- (11.5,0.);
\draw[very thick, color=black] (13.,5.) -- (14.,5.);
\draw[very thick, color=black] (10.,6.) -- (10.5,7.); \draw[very thick, color=black] (10.,6.) -- (9.,6.);
\draw[very thick, color=black] (8.5,3.5) -- (7.5,4.);
\draw[thick, color=bfonce] (10.3,2.1) -- (10.5,2.6);
\draw[thick, color=bfonce] (10.,4.) .. controls (10.1,3.) .. (10.5,2.6);
\draw[thick, color=bfonce] (10.5,2.6) .. controls (11.,2.6) .. (12.,3.);
\draw[thick, color=bfonce] (8.5,3.5) -- (10.3,2.1);
\draw[thick, color=bfonce] (8.,2.) .. controls (9.,2.3) .. (10.3,2.1);
\draw[thick, color=bfonce] (11.,1.) .. controls (10.8,2.) .. (10.3,2.1);
\draw[thick, color=orangec!80!black] (10.,4.) .. controls (10.8,5.) .. (11.2,4.8) .. controls (11.6,4.7) .. (12.,3.);
\draw[thick, color=orangec!80!black] (13.,5.) -- (11.2,4.8);
\draw[thick, color=orangec!80!black] (10.,6.) -- (11.2,4.8);
\draw[very thick, color=black] (12.,3.) -- (10.,4.) node[midway,sloped,above]{$D_{L,\edge}$} node[midway,sloped,below]{$D_{K,\edge}$};
\end{tikzpicture}
}
\end{center}
\caption{Mesh and associated notations.}
\label{fig:space_disc}
\end{figure}
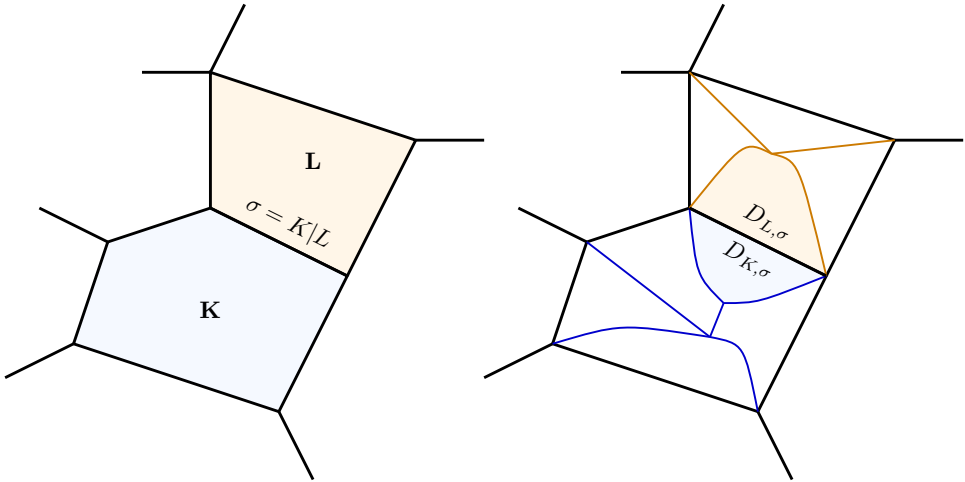
%
%
\section{A weakly convergent discrete gradient}\label{sec:grad}

Let $\varphi \in C_c^\infty(\Omega)$ and, for $K \in \mesh$, let $\bfx_K$ be a point of $K$ and $\varphi_K=\varphi(\bfx_K)$.
For $\edge \in \edges$, let
\begin{equation} \label{eq:grad}
\begin{array}{ll} \displaystyle
(\gradi_\edges \varphi)_\edge = \frac{|\edge|}{|D_\edge|}\ (\varphi_L-\varphi_K)\, \bfn_{K,\edge}
&
\mbox{ if }\edge\in\edgesint, \edge=K|L,
\\[3ex] \displaystyle
(\gradi_\edges \varphi)_\edge = 0
&
\mbox{ if }\edge\in\edgesext.
\end{array}
\end{equation}
We characterize the regularity of the mesh by the following parameter:
\begin{equation}\label{eq:deftheta1}
	\theta_\mesh^\nabla=\max_{\edge \in \edgesint,\, \edge=K|L} \frac{|\edge|\ |\bfx_L - \bfx_K|}{|D_\edge|}.
\end{equation}
 Note that for Cartesian meshes, $\theta_\mesh^\nabla=1.$ 
With this definition, we get for $\edge \in \edges$,
\begin{equation}\label{eq:grad_bound}
|(\gradi_\edges \varphi)_\edge| \leq\ \theta_\mesh^\nabla \ |\gradi \varphi|_{L^\infty(\Omega)^d},
\end{equation}
where $\gradi_\edges \varphi$  is the piecewise constant function equal to $(\gradi_\edges \varphi)_\edge$ over $D_\edge$, for $\edge \in \edges$.

\medskip
\begin{lemma}\label{lem:grad}
Let $(\mesh\exm)_{m \in \xN}$ be a sequence of meshes such that the mesh step $h_{\mesh\exm}$ tends to zero when $m$ tends to $+\infty$.
We suppose that the mesh parameters $\theta^\nabla_{\mesh\exm}$ defined by \eqref{eq:deftheta1}  are uniformly bounded with respect to $m$, {\it i.e.}:  
\begin{equation}
\exists \theta^\nabla \in \xR_+ \ : \ \forall m\in\xN \theta^\nabla_{\mesh\exm} \leq \theta^\nabla.
\label{hyp:mesh_grad}	
\end{equation}
Let $\varphi \in C_c^\infty(\Omega)$ and, for $m \in \xN$, let $\gradi_{\edges\exm} \varphi \in L^\infty(\Omega)^d$ be defined by \eqref{eq:grad}.
Then the sequence $(\gradi_{\edges\exm} \varphi)_{m \in \xN}$ is bounded in $L^\infty(\Omega)^d$ uniformly with respect to $m$ and converges to $\gradi \varphi$ in $L^\infty(\Omega)^d$ weak $\star$.
\end{lemma}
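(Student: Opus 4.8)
The plan is to prove the two assertions separately; the uniform $L^\infty$ bound is immediate, while the weak $\star$ convergence rests on a discrete integration by parts. The bound follows at once from the pointwise estimate \eqref{eq:grad_bound} and the uniformity hypothesis \eqref{hyp:mesh_grad}, giving $|\gradi_{\edges\exm}\varphi|_{L^\infty(\Omega)^d}\le\theta^\nabla\,|\gradi\varphi|_{L^\infty(\Omega)^d}$ for all $m$. Since $L^\infty(\Omega)^d$ is the dual of $L^1(\Omega)^d$ and $\gradi\varphi\in L^\infty(\Omega)^d$, this bound also reduces the weak $\star$ convergence to checking it against a dense subset of $L^1(\Omega)^d$, namely: for every $\bfpsi\in C_c^\infty(\Omega)^d$,
\[
\int_\Omega\gradi_{\edges\exm}\varphi\cdot\bfpsi\dx\longrightarrow\int_\Omega\gradi\varphi\cdot\bfpsi\dx\qquad\text{as }m\to+\infty ;
\]
for a general $\boldsymbol g\in L^1(\Omega)^d$ one then approximates $\boldsymbol g$ in $L^1$ by such a $\bfpsi$ and controls the remaining term by $(\theta^\nabla+1)\,|\gradi\varphi|_{L^\infty(\Omega)^d}\,|\boldsymbol g-\bfpsi|_{L^1(\Omega)^d}$, uniformly in $m$.

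Next I would rewrite $\int_\Omega\gradi_{\edges\exm}\varphi\cdot\bfpsi\dx$ through a discrete integration by parts. Setting $\overline{\bfpsi}_\edge=|D_\edge|^{-1}\int_{D_\edge}\bfpsi\dx$ and using that $\gradi_{\edges\exm}\varphi$ is piecewise constant on the dual mesh and vanishes on $\edgesext$,
\[
\int_\Omega\gradi_{\edges\exm}\varphi\cdot\bfpsi\dx=\sum_{\edge=K|L}|\edge|\,(\varphi_L-\varphi_K)\,\bfn_{K,\edge}\cdot\overline{\bfpsi}_\edge=-\sum_{K\in\mesh}\varphi_K\sum_{\edge\in\edges_K\cap\edgesint}|\edge|\,\bfn_{K,\edge}\cdot\overline{\bfpsi}_\edge ,
\]
the last equality coming from reordering the sum over interior faces into a sum over cells (each face $\edge=K|L$ splits into a term $-\varphi_K|\edge|\,\bfn_{K,\edge}\cdot\overline{\bfpsi}_\edge$ attached to $K$ and, since $\bfn_{L,\edge}=-\bfn_{K,\edge}$, one of the same form attached to $L$). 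In parallel, as $\varphi,\bfpsi\in C_c^\infty(\Omega)$, an integration by parts gives $\int_\Omega\gradi\varphi\cdot\bfpsi\dx=-\sum_{K\in\mesh}\int_K\varphi\,\dive\bfpsi\dx$; replacing $\varphi$ by $\varphi_K$ on each $K$, which costs $O\bigl(h_{\mesh\exm}\,|\gradi\varphi|_{L^\infty(\Omega)^d}\,|\dive\bfpsi|_{L^1(\Omega)}\bigr)$ since $\bfx_K\in K$, then applying the divergence theorem on the polyhedral cell $K$ and dropping the exterior faces on which $\bfpsi$ vanishes, I would obtain
\[
\int_\Omega\gradi\varphi\cdot\bfpsi\dx=-\sum_{K\in\mesh}\varphi_K\sum_{\edge\in\edges_K\cap\edgesint}|\edge|\,\bfn_{K,\edge}\cdot\widetilde{\bfpsi}_\edge+O(h_{\mesh\exm}),\qquad\widetilde{\bfpsi}_\edge=\frac{1}{|\edge|}\int_\edge\bfpsi\,\mathrm{d}s .
\]

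Finally I would compare the two right-hand sides. Reordering the cell sum back into a sum over interior faces, their difference equals $\sum_{\edge=K|L}(\varphi_L-\varphi_K)\,|\edge|\,\bfn_{K,\edge}\cdot(\overline{\bfpsi}_\edge-\widetilde{\bfpsi}_\edge)+O(h_{\mesh\exm})$. Because $\edge\subset D_\edge$ and $\mathrm{diam}(D_\edge)\le h_K+h_L\le 2h_{\mesh\exm}$, both $\overline{\bfpsi}_\edge$ and $\widetilde{\bfpsi}_\edge$ lie within $2h_{\mesh\exm}\,|\gradi\bfpsi|_{L^\infty}$ of $\bfpsi$ evaluated at any point of $\edge$, hence $|\overline{\bfpsi}_\edge-\widetilde{\bfpsi}_\edge|\le 4h_{\mesh\exm}\,|\gradi\bfpsi|_{L^\infty}$; together with $|\varphi_L-\varphi_K|\le|\gradi\varphi|_{L^\infty}\,|\bfx_L-\bfx_K|$ this bounds the sum by $4h_{\mesh\exm}\,|\gradi\bfpsi|_{L^\infty}\,|\gradi\varphi|_{L^\infty}\sum_{\edge=K|L}|\edge|\,|\bfx_L-\bfx_K|$. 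The decisive step---and the only one that truly uses \eqref{hyp:mesh_grad}---is that, by the very definition \eqref{eq:deftheta1} of $\theta^\nabla_{\mesh\exm}$ and the fact that $(D_\edge)_{\edge\in\edges}$ partitions $\Omega$,
\[
\sum_{\edge=K|L}|\edge|\,|\bfx_L-\bfx_K|\le\theta^\nabla_{\mesh\exm}\sum_{\edge\in\edges}|D_\edge|=\theta^\nabla_{\mesh\exm}\,|\Omega|\le\theta^\nabla\,|\Omega| .
\]
Hence $\bigl|\int_\Omega\gradi_{\edges\exm}\varphi\cdot\bfpsi\dx-\int_\Omega\gradi\varphi\cdot\bfpsi\dx\bigr|\le C(\varphi,\bfpsi,\theta^\nabla,\Omega)\,h_{\mesh\exm}$, which tends to $0$ and concludes. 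I expect this last estimate to be the crux: for an arbitrary sequence of meshes the ``face-to-diamond'' mismatch terms weighted by $|\edge|\,|\bfx_L-\bfx_K|$ need not stay bounded under refinement, and it is exactly the uniform bound on $\theta^\nabla_{\mesh\exm}$ that controls them; the rest---the two discrete integrations by parts and the cell-wise divergence theorem---is routine bookkeeping, using only that $\bfpsi$ vanishes on $\edgesext$.
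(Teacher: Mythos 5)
Your proof is correct and follows essentially the same route as the paper: uniform boundedness from \eqref{eq:grad_bound}, reduction by density to test fields $\bfpsi\in C_c^\infty(\Omega)^d$, a discrete integration by parts reordering face sums into cell sums, the divergence theorem on each cell with the $O(h_{\mesh\exm})$ error from replacing $\varphi$ by $\varphi(\bfx_K)$, and the key control of the face-mean versus dual-cell-mean mismatch by $\theta^\nabla_{\mesh\exm}$ exactly as in the paper's remainder $R\exm$. The only cosmetic difference is that you compare the two integrals directly rather than splitting $I\exm=\tilde I\exm+R\exm$ first, and your incidental claim that $\edge\subset D_\edge$ is neither needed nor assumed (cf.\ Remark \ref{rem:D}); the estimate holds anyway since $D_\edge\subset K\cup L$.
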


\medskip
\begin{proof}
The fact that the sequence $(\gradi_{\edges\exm} \varphi)_{m \in \xN}$ is bounded in $L^\infty(\Omega)^d$ is a straightforward consequence of Inequality \eqref{eq:grad_bound} and the assumption $\theta^\nabla_{\mesh\exm} \leq \theta^\nabla$ for $m\in\xN$.
Let $\bfpsi \in C_c^\infty(\Omega)^d$.
For $m \in \xN$ and $\edge \in \edges\exm$, let $\bfpsi_\edge$ and $\bar \bfpsi_\edge$ be defined by the mean value of $\bfpsi$ over $\edge$ and $D_\edge$ respectively.
Since $\gradi_{\edges\exm} \varphi$ is piecewise constant over the dual cells $D_\edge$, we have
\[
I\exm :=\int_\Omega \gradi_{\edges\exm}\varphi(\bfx)\, \cdot \bfpsi(\bfx) \dx=
\sum_{\edge \in \edges\exm} |D_\edge|\ (\gradi_{\edges_n}\varphi)_\edge\, \cdot\bar \bfpsi_\edge=\tilde I\exm + R\exm,
\]
with
\[
\tilde I\exm=\sum_{\edge \in \edges\exm} |D_\edge|\ (\gradi_{\edges\exm}\varphi)_\edge\, \cdot \bfpsi_\edge, \quad
R\exm=\sum_{\edge \in \edges\exm} |D_\edge|\ (\gradi_{\edges\exm}\varphi)_\edge\, \cdot (\bar \bfpsi_\edge-\bfpsi_\edge).
\]
We first consider $\tilde I\exm$.
Since $\psi$ has a compact support in $\Omega$, the quantity $\bfpsi_\edge$ vanishes for any $\edge \in \edgesext\exm$.
We thus get, using the definition \eqref{eq:grad} of the discrete gradient and reordering the sums:
\[
\tilde I\exm = \sum_{\edge \in \edgesint\exm,\, \edge=K|L} |\edge|\ (\varphi_L-\varphi_K)\, \bfn_{K,\edge} \cdot \bfpsi_\edge
=-\sum_{K\in\mesh\exm} \varphi_K \sum_{\edge \in \edges_K} |\edge|\ \bfpsi_\edge \cdot \bfn_{K,\edge}.
\]
Hence,
\[
\tilde I\exm = -\sum_{K\in\mesh\exm} \varphi(\bfx_K)\ \int_K \dive \bfpsi(\bfx) \dx
= -\int_\Omega \varphi(\bfx)\,\dive \bfpsi(\bfx) \dx +\tilde R\exm,
\]
with
\[
\tilde R\exm = \sum_{K\in\mesh\exm}\ \int_K \bigl(\varphi(\bfx)-\varphi(\bfx_K)\bigr)\ \dive \bfpsi(\bfx) \dx.
\]
The remainder term $\tilde R\exm$ may be bounded as follows
\begin{equation}\label{eq:rem1}
|\tilde R\exm| \leq |\gradi \varphi|_{L^\infty(\Omega)^d}\  |\gradi \bfpsi|_{L^\infty(\Omega)^{d\times d}}\ |\Omega|\ h_{\mesh\exm},
\end{equation}
and thus
\[
\lim_{m \to +\infty} \tilde I\exm = -\int_\Omega \varphi(\bfx)\ \dive \bfpsi(\bfx) \dx = \int_\Omega \gradi \varphi(\bfx) \cdot \bfpsi(\bfx) \dx.
\]
The term $R\exm$ reads, once again thanks to the definition \eqref{eq:grad} of the discrete gradient:
\[
R\exm = \sum_{\edge \in \edgesint\exm,\, \edge=K|L} |D_\edge|\ (\gradi_{\edges\exm}\varphi)_\edge\, \cdot (\bar \bfpsi_\edge-\bfpsi_\edge).
\]
Since $\bfpsi$ is regular, there exists $\bfx_\edge$ and $\bfx_{D_\edge}$ such that $\bfpsi_\edge=\bfpsi(\bfx_\edge)$ and $\bar \bfpsi_\edge=\bfpsi(\bfx_{D_\edge})$ respectively.
Hence, since $|\bfx_\edge-\bfx_{D_\edge}| \leq \max\,(h_K,h_L)$, we have thanks to \eqref{eq:grad_bound}:
\begin{equation}\label{eq:rem2}
|R\exm| \leq |\gradi \varphi|_{L^\infty(\Omega)^d}\  |\gradi \bfpsi|_{L^\infty(\Omega)^{d\times d}}\ |\Omega|\ \theta^\nabla_{\mesh\exm} \ h_{\mesh\exm},
\end{equation}
and thus
\[
\lim_{m \to +\infty} |R\exm| =0.
\]
The conclusion of the proof is obtained by invoking the density of the functions of $C_c^\infty(\Omega)^d$ in $L^1(\Omega)^d$.\\
\end{proof}

\medskip
\begin{remark}\label{rem:cvu}
Estimates \eqref{eq:rem1} and \eqref{eq:rem2}, show that the difference $D\exm$ defined by
\[
D\exm = \int_\Omega \bigl|\bigl(\gradi_{\edges\exm}\varphi(\bfx)- \gradi\varphi(\bfx)\bigr) \cdot \bfpsi(\bfx) \bigr| \dx
\]
 can be bounded by a parameter depending only on the sequence of meshes and on $\varphi$, $|\gradi \varphi|_{L^\infty(\Omega)^d}$ and $|\gradi \bfpsi|_{L^\infty(\Omega)^{d\times d}}$.
This point is used hereafter to extend the present convergence result to time depending functions.
\end{remark}

\medskip
\begin{remark}[Choice of $\bfx_K$ and $D_\edge$]\label{rem:D}
Note that, apart from the need to ensure the regularity of the sequence of meshes (\ie\ $\theta^\nabla_{\mesh\exm} \leq \theta^\nabla$ for $m\in\xN$), there is almost no constraint on the choice of $\bfx_K$ in $K$ and on $D_\edge$, which is just a volume associated to the face $\edge$ which, for the proof of Lemma \ref{lem:grad}, does not even need to contain $\edge$ itself. 
\end{remark}

\medskip
\begin{remark}[On the mesh regularity assumption]\label{rem:reg_grad}
Some regularity constraints for the sequence of meshes may be shown to be strong enough to control the parameter $\theta_\mesh^\nabla$.
First, let us suppose that the cells are not allowed to be too flat, \ie\ that there exists $C>0$ such that
\begin{equation}\label{eq:nflat}
|K| \geq C\,h_K^d,\quad \forall K \in \mesh\exm,\ \forall m \in \xN.
\end{equation}
Then, let us denote by $\tau_\mesh$ the parameter:
\[
\tau_\mesh = \max_{K\in\mesh,\ \edge \in \edges_K}\ \frac{|K|}{|D_{K,\edge}|}.
\]
Since the definition of $D_\edge$ is almost arbitrary, $\tau_\mesh$ may be kept bounded away from zero for any sequence of meshes, provided that the number of faces of the cells is bounded; for instance, in Section \ref{sec:lw}, we choose $D_\edge$ is such a way that $\tau_\mesh$ is equal to the inverse of the maximum number of cell faces.
We then have:
\[
\theta_\mesh^\nabla \leq \max_{\edge \in \edgesint,\, \edge=K|L} \frac{(h_K+h_L)\ \min\,(h_K^{d-1},h_L^{d-1})}{\tau_\mesh\ \max\,(|K|,|L|)}
\leq 2\, \frac C {\tau_\mesh}.
\]
The case of "flat cells" is more intricate.
However, let us suppose that, for $m\in\xN$ and $\edge \in \edgesint\exm$, $\edge=K|L$,
\bli
\item $\bfx_K$ and $\bfx_L$ may be chosen such that $K$ and $L$ are star-shaped with respect to $\bfx_K$ and $\bfx_L$ respectively,
\item we choose for $D_{K,\edge}$ and $D_{L,\edge}$ the cones of basis $\edge$ and vertex $\bfx_K$ and $\bfx_L$ respectively,
\item there exists $C>0$ such that $|(\bfx_K-\bfx_L) \cdot \bfn_{K,\edge}| \geq C\, |\bfx_K-\bfx_L|$ (which may be referred to as a "uniform orthogonality condition").
\end{list}
Then $|\bfx_K-\bfx_L|\ |\edge| \leq d \, |D_\edge|/C$ (see Fig. \ref{fig:xKxL}).
\end{remark}

\begin{figure}[h!]
\begin{center}
\scalebox{0.9}{
\begin{tikzpicture} 
\draw[thick, color=black] (-0.2,1.) -- (4.2,1.); \draw[thick, color=black] (-0.2,1.5) -- (4.2,1.5); \draw[thick, color=black] (-0.2,2.) -- (4.2,2.);
\draw[thick, color=black] (0.,0.8) -- (0.,2.2); \draw[thick, color=black] (4.,0.8) -- (4.,2.2);
\filldraw[bfonce] (2.1,1.25) circle(0.05);\node at (1.7, 1.25){$\bfx_K$};
\filldraw[bfonce] (1.9,1.75) circle(0.05); \node at (1.5,1.75){$\bfx_L$};
\draw[thin, color=bfonce] (2.1,1.25) -- (1.9,1.75);
\node at (2, 0.4){(a)};
\draw[thick, color=black] (5.8,1.) -- (10.2,1.); \draw[thick, color=black] (5.8,1.5) -- (10.2,1.5); \draw[thick, color=black] (5.8,2.) -- (10.2,2.);
\draw[thick, color=black] (6.,0.8) -- (6.,2.2); \draw[thick, color=black] (10.,0.8) -- (10.,2.2);
\filldraw[bfonce] (6.1,1.25) circle(0.05);\node at (6.5, 1.14){$\bfx_K$};
\filldraw[bfonce] (9.9,1.75) circle(0.05); \node at (9.5,1.84){$\bfx_L$};
\draw[thin, color=bfonce] (6.1,1.25) -- (9.9,1.75);
\node at (8, 0.4){(b)}; 
\end{tikzpicture}
}
\end{center}
\caption{Choice of $\bfx_K$ and $\bfx_L$ for "flat cells": (a) convenient choice, (b) quasi-orthogonality is lost when cells become flatter and flatter.}
\label{fig:xKxL}
\end{figure}
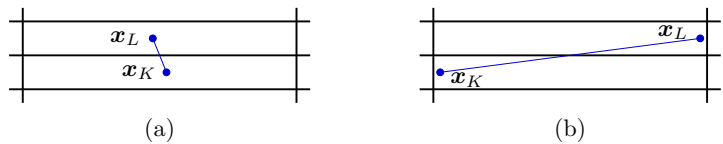

\medskip
The analysis of finite volume schemes for (systems of) conservation laws necessitates the extension of Lemma \ref{lem:grad} to time-dependent functions.
Let $T>0$ and a time discretization of the interval $(0,T)$, \ie\ a sequence $\tdisc= (t_n)_{0\leq n \leq N}$ with $0=t_0< t_1 \dots <t_n < t_{n+1} \dots < t_N=T$, be given; we define $\delta t_n=t_{n+1}-t_n$ and $\delta t_\tdisc = \max \, \{ \delta t_n,\ 0\leq n < N \}$. 
Let $\varphi \in C_c^\infty(\Omega \times [0,T])$;  then the piecewise function $\gradi_{\edges,\tdisc}\, \varphi$ is defined by:
\begin{multline} \label{eq:gradT}
\gradi_{\edges,\tdisc}\, \varphi = \sum_{n=0}^{N-1} \sum_{\sigma \in \edges} (\gradi_{\edges,\tdisc}\, \varphi)_\edge^n\ \charac_{D_\sigma}(\bfx)\ \charac_{[t_n,t_{n+1}[}(t), \mbox{ with }
\\
(\gradi_{\edges,\tdisc}\, \varphi)_\edge^n =
\begin{cases} \displaystyle
\frac{|\edge|}{|D_\edge|}\ (\varphi_L^n-\varphi_K^n)\, \bfn_{K,\edge} & \mbox{ if }\edge\in\edgesint, \edge=K|L,
\\[2ex] \displaystyle
0 & \mbox{ if }\edge\in\edgesext.
\end{cases}
\end{multline}
where, for $K \in \mesh$ and $0\leq n < N$, $\varphi^n_K=\varphi(\bfx_K,t_n)$, and for a given set $A$, $\charac_A$ is the characteristic function of $A$, that is $\charac_A (x) = 1$ if $x \in A$ and $\charac_A (x) = 0$ otherwise.
Then the following weak convergence result holds.

\medskip
\begin{lemma}\label{lem:gradT}
Let $(\mesh\exm)_{m \in \xN}$ be a sequence of meshes such that the mesh step $h_{\mesh\exm}$ tends to zero when $m$ tends to $+\infty$ and satisfies the assumption \eqref{hyp:mesh_grad}.
For $m \in \xN$, we suppose given a time discretization $\tdisc\exm$, and suppose that $\delta t_{\tdisc \exm}$ also tends to zero when $m$ tends to $+\infty$.

Let $\varphi \in C_c^\infty(\Omega \times [0,T])$ and, for $m \in \xN$, $\gradi_{\edges\exm,\tdisc\exm}\, \varphi \in L^\infty(\Omega \times[0,T])^d$ be defined by \eqref{eq:gradT}.
Then the sequence $(\gradi_{\edges\exm,\tdisc\exm}\, \varphi)_{m \in \xN}$ is bounded in $L^\infty(\Omega \times[0,T])^d$ uniformly with respect to $m$ and converges to $\gradi \varphi$ in $L^\infty(\Omega \times[0,T])^d$ weak $\star$.
\end{lemma}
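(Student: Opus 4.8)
The plan is to reduce the assertion to the time-independent Lemma~\ref{lem:grad}, or rather to the quantitative estimate underlying it (recorded in Remark~\ref{rem:cvu}), by freezing the time variable on each time slab. I first dispose of the uniform bound: for $(\bfx,t)\in D_\edge\times[t_n,t_{n+1}[$ with $\edge=K|L\in\edgesint\exm$, since $|\bfn_{K,\edge}|=1$ one has, exactly as in the derivation of \eqref{eq:grad_bound},
\[
|\gradi_{\edges\exm,\tdisc\exm}\varphi(\bfx,t)|=\frac{|\edge|}{|D_\edge|}\,|\varphi(\bfx_L,t_n)-\varphi(\bfx_K,t_n)|\leq\frac{|\edge|\,|\bfx_L-\bfx_K|}{|D_\edge|}\,|\gradi\varphi|_{L^\infty(\Omega\times[0,T])^d},
\]
so that \eqref{eq:deftheta1} and \eqref{hyp:mesh_grad} give $|\gradi_{\edges\exm,\tdisc\exm}\varphi|_{L^\infty(\Omega\times[0,T])^d}\leq\theta^\nabla\,|\gradi\varphi|_{L^\infty(\Omega\times[0,T])^d}$ for every $m$.

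For the weak-$\star$ convergence, the key remark is that, for a fixed $t\in[t_n,t_{n+1}[$, the function $\bfx\mapsto\gradi_{\edges\exm,\tdisc\exm}\varphi(\bfx,t)$ is exactly the discrete gradient \eqref{eq:grad}, built on the mesh $\mesh\exm$, of the frozen function $\varphi(\cdot,t_n)\in C_c^\infty(\Omega)$. Using the density of $C_c^\infty(\Omega\times[0,T])^d$ in $L^1(\Omega\times(0,T))^d$ together with the uniform bound just obtained (a routine density argument for a bounded sequence), it suffices to prove that $\int_0^T\!\int_\Omega\gradi_{\edges\exm,\tdisc\exm}\varphi\cdot\bfpsi\dx\dt\to\int_0^T\!\int_\Omega\gradi\varphi\cdot\bfpsi\dx\dt$ for each fixed $\bfpsi\in C_c^\infty(\Omega\times[0,T])^d$. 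Writing $\Omega\times(0,T)$ as the disjoint union of the time slabs $\Omega\times[t_n,t_{n+1}[$ and inserting $\pm\,\gradi\varphi(\cdot,t_n)$ in the integrand over each slab, I would split $\int_0^T\!\int_\Omega(\gradi_{\edges\exm,\tdisc\exm}\varphi-\gradi\varphi)\cdot\bfpsi\dx\dt$ as a sum $A\exm+B\exm$, where $A\exm$ collects the spatial discretization errors, with slab integrand $\bigl(\gradi_{\edges\exm}(\varphi(\cdot,t_n))-\gradi\varphi(\cdot,t_n)\bigr)\cdot\bfpsi(\cdot,t)$, and $B\exm$ the time-increment errors, with slab integrand $\bigl(\gradi\varphi(\cdot,t_n)-\gradi\varphi(\cdot,t)\bigr)\cdot\bfpsi(\cdot,t)$.

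The term $B\exm$ is harmless because $\varphi$ is smooth in time: $|\gradi\varphi(\bfx,t_n)-\gradi\varphi(\bfx,t)|\leq\delta t_{\tdisc\exm}\,|\partial_t\gradi\varphi|_{L^\infty(\Omega\times[0,T])^d}$ for $t\in[t_n,t_{n+1}[$, whence $|B\exm|\leq\delta t_{\tdisc\exm}\,|\partial_t\gradi\varphi|_{L^\infty(\Omega\times[0,T])^d}\,|\bfpsi|_{L^1(\Omega\times(0,T))^d}\to0$. For $A\exm$, for each fixed $t$ the function $\bfpsi(\cdot,t)$ lies in $C_c^\infty(\Omega)^d$, and applying the estimates \eqref{eq:rem1}--\eqref{eq:rem2} (cf. Remark~\ref{rem:cvu}) to the frozen function $\varphi(\cdot,t_n)$ and the test function $\bfpsi(\cdot,t)$ gives
\begin{multline*}
\Bigl|\int_\Omega\bigl(\gradi_{\edges\exm}(\varphi(\cdot,t_n))-\gradi\varphi(\cdot,t_n)\bigr)\cdot\bfpsi(\cdot,t)\dx\Bigr|\\
\leq|\gradi\varphi|_{L^\infty(\Omega\times[0,T])^d}\,|\gradi\bfpsi|_{L^\infty(\Omega\times[0,T])^{d\times d}}\,|\Omega|\,(1+\theta^\nabla)\,h_{\mesh\exm},
\end{multline*}
a bound uniform in $n$ and in $t$ thanks to \eqref{hyp:mesh_grad}; integrating over $(0,T)$ yields $|A\exm|\leq T\,|\gradi\varphi|_{L^\infty(\Omega\times[0,T])^d}\,|\gradi\bfpsi|_{L^\infty(\Omega\times[0,T])^{d\times d}}\,|\Omega|\,(1+\theta^\nabla)\,h_{\mesh\exm}\to0$. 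Combining the two estimates with the density argument concludes the proof.

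The only delicate point --- the part I expect to be the main obstacle --- is that each time slab carries a \emph{different} frozen function $\varphi(\cdot,t_n)$, so Lemma~\ref{lem:grad} itself cannot be invoked slab by slab; one genuinely needs the spatial discretization error to be controlled through the quantities $|\gradi\varphi(\cdot,t_n)|_{L^\infty(\Omega)^d}\leq|\gradi\varphi|_{L^\infty(\Omega\times[0,T])^d}$ and $|\gradi\bfpsi(\cdot,t)|_{L^\infty(\Omega)^{d\times d}}$ alone, hence uniformly in $n$ and $t$ --- which is exactly what Remark~\ref{rem:cvu} provides. The remaining ingredients, namely the $O(\delta t_{\tdisc\exm})$ estimate for the time increments and the closure by density using the uniform $L^\infty$ bound, are routine.
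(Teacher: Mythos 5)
Your proof is correct and follows essentially the same route as the paper: both rest on the observation that the spatial estimates \eqref{eq:rem1}--\eqref{eq:rem2} (Remark~\ref{rem:cvu}) are uniform in the frozen time $t_n$, with the residual time dependence handled by the smoothness of $\varphi$ and $\bfpsi$, and both close by density of $C_c^\infty$ in $L^1$ using the uniform $L^\infty$ bound. The only difference is bookkeeping --- you split the error directly into a spatial part $A\exm$ and a time-increment part $B\exm$, whereas the paper phrases it as uniform convergence of $I\exm(t)$ to $I(t)$ followed by a Riemann-sum argument --- and your identification of the ``delicate point'' is exactly the role Remark~\ref{rem:cvu} plays in the paper.
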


\medskip
\begin{proof}
Let $\bfpsi \in C_c^\infty(\Omega\times[0,T])^d$, and let us define, for $m \in \xN$, the following functions of time:
\[
I(t) = \int_\Omega \gradi \varphi(\bfx,t)\cdot \bfpsi(\bfx,t) \dx,\quad
I\exm(t) =  \int_\Omega \gradi_{\edges\exm,\tdisc\exm}\, \varphi(\bfx,t)\cdot \bfpsi(\bfx,t) \dx.
\]
Thanks to remark \ref{rem:cvu} and the regularity of $\varphi$ and $\bfpsi$, $I\exm(t)$ converges to $I(t)$ uniformly with respect to $t$, with $I \in C^\infty_c(0,T)$.
The integral
\[
\int_0^T \int_\Omega (\gradi_{\edges\exm,\tdisc\exm}\, \varphi(\bfx,t)\cdot \bfpsi(\bfx,t) \dx \dt
=\sum_{n=0}^{N\exm-1} (t_{n+1}-t_n)\ I\exm(t_n) +R\exm,
\]
where $R\exm$ is a remainder term tending to zero as $\delta t_{\tdisc \exm}$ thanks to the regularity of $\bfpsi$.
We have
\begin{multline*}
\sum_{n=0}^{N\exm-1} (t_{n+1}-t_n)\ I\exm(t_n)=\sum_{n=0}^{N\exm-1} (t_{n+1}-t_n)\ I(t_n)
\\
+ \sum_{n=0}^{N\exm-1} (t_{n+1}-t_n)\ \bigl(I\exm(t_n)-I(t_n)\bigr)
\end{multline*}
The second term tends to zero when $m$ tends to $+\infty$ thanks to the uniform convergence of $I\exm$ to $I$.
By the regularity of $I$, the first one converges to
\[
\int_0^T I(t) \dt = \int_0^T \int_\Omega \gradi \varphi(\bfx,t)\cdot \bfpsi(\bfx,t) \dx \dt.
\]
The conclusion follows by density of $C_c^\infty(\Omega\times[0,T])^d$ in $L^1(\Omega\times[0,T])^d$.\\
\end{proof}
%
%
\section{Convergence of ``discrete translations" of functions of $L^1$}\label{sec:kolm}

The proof of the original Lax-Wendroff theorem (extended to non uniform meshes in \cite[Theorem 21.2]{egh-00-fin}) relies on the mean continuity of integrable functions, which is used to prove that for a sequence $(u_p)_{p\in \xN}$ of converging functions in $L^1(\Omega)$, 
\begin{equation}\label{lim-translates}
\int_\Omega |u_p(\cdot + h) - u_p| \dx \to 0 \mbox{ as } h \to 0 \mbox{ uniformly w.r.t. } p.
\end{equation}
This proof may be extended to Cartesian meshes; however, on  an unstructured mesh, the notion of translation is no longer clear and the problem must be reformulated as follows.
For $u \in L^1(\Omega)$ and $K \in \mesh$, we denote by $u_K$ the mean value of $u$ and we set for $\edge \in \edgesint$, $\edge=K|L$, $[u]_\edge = |u_K-u_L|$.
We now introduce the quantity
\begin{equation}\label{trans-u}
T_\mesh u= \sum_{\edge \in \edgesint, \edge=K|L} |D_\edge| [u]_\edge.
\end{equation}
The objective of this section is to prove that, for a sequence $(u_p)_{p \in \xN}$ of functions of $L^1(\Omega)$ and a sequence of increasingly refined meshes $(\mesh\exm)_{m \in \xN}$, supposed to be regular in a sense to be defined, the quantity $T_{\mesh\exm} u_p$ tends to zero as $m \to +\infty$ uniformly with respect to $p$.
Note that in the case of a uniform 1D mesh such that $|D_\edge|=h$ for any $\edge \in \edges$, this is equivalent to showing \eqref{lim-translates}.

\medskip
The proof of this result is split in two steps; we first consider a fixed function $u \in L^1(\Omega)$ and prove a generalisation of the mean continuity in Lemma \ref{lem-trans};  we then address the case of a converging sequence in $L^1(\Omega)$ in Lemma \ref{lem:transuT}.
The technique used here is reminiscent of underlying arguments invoked in \cite[Section 4.2.2]{god-96-num} for two-dimensional triangular meshes; we consider here general meshes, paying a special attention to mesh regularity requirements.

\medskip
To formulate the regularity assumption of the sequence of meshes considered in this section, we introduce the following parameter:
\begin{equation} \label{reg-m}
\theta_\mesh=\max_{K \in \mesh} \max_{\edge \in \edges_K} \frac { |D_\edge|}{|K|}.
\end{equation}
By definition, we thus have $|D_\edge| \le \theta_\mesh\, |K|$, for $\edge \in \edges_K$, $K \in \mesh$.

\medskip
\begin{lemma}\label{lem-trans}
Let $\theta >0$  and $(\mesh\exm)_{m \in \xN}$ be a sequence of meshes such that $\theta_{\mesh\exm} \le \theta$ for all $m \in \xN$ and $\lim_{m \to +\infty} h_{\mesh\exm} =0$.
We suppose that the number of faces of a cell $K \in \mesh\exm$ is bounded by $\mathcal N_\edges$, for any $m \in \xN$.
Let $u \in L^1(\Omega)$, let $u_K$ denote the mean value of $u$ on a cell $K$, and let $T_{\mesh\exm} u$ be defined by \eqref{trans-u}.
Then,
\begin{equation} \label{lim-tu}
\lim_{m \to +\infty} T_{\mesh\exm} u=0.
\end{equation}
\end{lemma}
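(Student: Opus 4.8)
The plan is to prove the result first for a dense class of functions and then extend by an $\epsilon/3$-type argument using the uniform bound $\theta_{\mesh\exm}\le\theta$. Concretely, I would first establish \eqref{lim-tu} for $u\in C_c^\infty(\Omega)$ (or Lipschitz $u$): in that case, for $\edge=K|L$ interior, $[u]_\edge=|u_K-u_L|\le |u_K-u(\bfx)|+|u(\bfx)-u(\bfy)|+|u(\bfy)-u_L|$ for $\bfx\in K$, $\bfy\in L$, and averaging gives $[u]_\edge\le \mathrm{Lip}(u)\,(h_K+h_L+\mathrm{dist}(K,L))\le 3\,\mathrm{Lip}(u)\,h_{\mesh\exm}$ since $K$ and $L$ share the face $\edge$. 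Then
\[
T_{\mesh\exm}u=\sum_{\edge=K|L}|D_\edge|\,[u]_\edge\le 3\,\mathrm{Lip}(u)\,h_{\mesh\exm}\sum_{\edge\in\edgesint}|D_\edge|=3\,\mathrm{Lip}(u)\,h_{\mesh\exm}\,|\Omega|\to 0,
\]
using that the dual cells $(D_\edge)_{\edge\in\edges}$ partition $\Omega$.

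The second ingredient is a \emph{uniform} (in $m$) bound of the map $u\mapsto T_{\mesh\exm}u$ by $\|u\|_{L^1(\Omega)}$, which is where the regularity parameter $\theta$ and the bound $\mathcal N_\edges$ on the number of faces enter. For $\edge=K|L$, write $|D_\edge|\,[u]_\edge\le |D_\edge|\,(|u_K|+|u_L|)$ and note $|u_K|\le \frac1{|K|}\int_K|u|$, while $|D_\edge|\le\theta\,|K|$ for $\edge\in\edges_K$ by \eqref{reg-m}; hence $|D_\edge|\,|u_K|\le\theta\int_K|u|$. Summing over all interior faces, each cell $K$ contributes through its at most $\mathcal N_\edges$ faces, so
\[
T_{\mesh\exm}u\le 2\,\theta\,\mathcal N_\edges\,\sum_{K\in\mesh\exm}\int_K|u|\dx=2\,\theta\,\mathcal N_\edges\,\|u\|_{L^1(\Omega)}.
\]
Thus $T_{\mesh\exm}$ is, uniformly in $m$, a bounded sublinear functional on $L^1(\Omega)$ with constant $C:=2\theta\mathcal N_\edges$.

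Finally I would combine the two: given $u\in L^1(\Omega)$ and $\epsilon>0$, pick $v\in C_c^\infty(\Omega)$ with $\|u-v\|_{L^1(\Omega)}\le\epsilon/(2C)$. By sublinearity (the triangle inequality $[u]_\edge\le[u-v]_\edge+[v]_\edge$, which holds since mean values are linear and $|\cdot|$ is subadditive), $T_{\mesh\exm}u\le T_{\mesh\exm}(u-v)+T_{\mesh\exm}v\le C\|u-v\|_{L^1(\Omega)}+T_{\mesh\exm}v\le \epsilon/2+T_{\mesh\exm}v$. The first step gives $T_{\mesh\exm}v\le 3\,\mathrm{Lip}(v)\,|\Omega|\,h_{\mesh\exm}\to 0$, so $\limsup_{m\to+\infty}T_{\mesh\exm}u\le\epsilon/2$, and letting $\epsilon\to 0$ yields \eqref{lim-tu}. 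The only delicate point is bookkeeping in the uniform bound — making sure each cell is counted the right number of times and that the constant genuinely does not depend on $m$, which is exactly what the hypotheses $\theta_{\mesh\exm}\le\theta$ and "at most $\mathcal N_\edges$ faces per cell" are there to guarantee; the rest is the standard density/three-$\epsilon$ routine.
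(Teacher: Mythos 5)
Your proposal is correct and follows essentially the same route as the paper: a uniform bound $T_{\mesh\exm}u\le C(\theta,\mathcal N_\edges)\,\|u\|_{L^1(\Omega)}$ obtained from $|D_\edge|\le\theta|K|$ and the bounded number of faces, the estimate $T_{\mesh\exm}\varphi\le C\,\mathrm{Lip}(\varphi)\,h_{\mesh\exm}\,|\Omega|$ for smooth $\varphi$, and the standard density argument combining the two. The only differences are cosmetic (order of the steps and slightly looser constants, e.g. $2\theta\mathcal N_\edges$ versus the paper's $\theta\mathcal N_\edges$, and $3\,\mathrm{Lip}$ versus $2\,M_\varphi$), which do not affect the conclusion.
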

\begin{proof}
We first note that for $u \in L^1(\Omega)$ and a mesh $\mesh$,
\begin{align*}
T_\mesh(u) \le  \sum_{\edge \in \edgesint, \edge=K|L} |D_\edge|\ \bigl([u_K|+|u_L|\bigr)
&
\le \theta \sum_{\edge \in \edgesint, \edge=K|L} \bigl(|K|\ |u_K|+|L|\ |u_L|\bigr)
\\ &
\le \mathcal N_\edges\, \theta\ ||u||_{L^1(\Omega)}.
\end{align*}
Then, for any $\varphi \in C_c^\infty(\xR^d,\xR)$,
\begin{equation} \label{ine-cruc12}
T_{\mesh\exm} u \leq T_{\mesh\exm} (u-\varphi) + T_{\mesh\exm} \varphi \le  \mathcal N_\edges\, \theta\ ||u-\varphi||_{L^1(\Omega)}+ T_{\mesh\exm} \varphi.
\end{equation}
Let $\epsilon>0$.
Since $C_c^\infty(\Omega)$ is dense in $L^1(\Omega)$, there exists $\varphi \in C_c^\infty(\Omega)$ such that $\mathcal N_\edges\, \theta\ ||u-\varphi||_{L^1(\Omega)} \le \epsilon$.
Then, with this choice of $\varphi$,
\begin{equation} \label{ine-epsphi}
T_{\mesh\exm} u \le  \epsilon + T_{\mesh\exm} \varphi.
\end{equation}
We now use only the fact that $\varphi$ is Lipschitz continuous.
There exists $M_\varphi>0$ such that $|\varphi(\bfx) - \varphi(\bfy)| \le M_\varphi\, |\bfx-\bfy|$ for all $\bfx,\, \bfy \in \Omega$.
Then, for any $\edge =K|L \in \edgesint$, using $\bar K \cap \bar L \ne \emptyset$,
\begin{align*}
|K||L| [\varphi]_\edge=|K|\ |L|\ |\varphi_K -\varphi_L|
&
=\Bigl| \int_K \int_L \varphi(\bfx)-\varphi(\bfy) \dy \dx \bigr|
\\ &
\le M_\varphi \int_K \int_L |\bfx-\bfy| \dx \dy
\le M_\varphi\ |K|\ |L|\ (h_K+h_L).
\end{align*}
This yields
\[
T_{\mesh\exm} \varphi=\sum_{\edge \in \edgesint, \edge=K|L} |D_\edge| [\varphi]_\edge
\le 2 M_\varphi\, h_{\mesh\exm}\ \sum_{\edge \in \edgesint, \edge=K|L} |D_\edge|
\le 2 M_\varphi\, h_{\mesh\exm} |\Omega|.
\]
Since $\lim_{m \to +\infty} h_{\mesh\exm} =0$, there exists $m_0$ such that $2 M_\varphi\, h_{\mesh\exm} |\Omega| \le \epsilon$ for $m \ge m_0$ and then, with \eqref{ine-epsphi},
\[
T_{\mesh\exm} u \le 2 \epsilon \quad \textrm{for } m \ge m_0.
\]
\end{proof}

\medskip
The convergence result which constitutes the aim of this section is stated in the following lemma.

\medskip
\begin{lemma}\label{lem-transu}
Let $\theta >0$  and $(\mesh\exm)_{m \in \xN}$ be a sequence of meshes such that $\theta_{\mesh\exm} \le \theta$ for all $m \in \xN$ and $\lim_{m \to +\infty} h_{\mesh\exm} =0$.
We suppose that the number of faces of a cell $K \in \mesh\exm$ is bounded by $\mathcal N_\edges$, for any $m \in \xN$.
Let $u \in L^1(\Omega)$ and $(u_p)_{p \in \xN}$ be a sequence of functions of $L^1(\Omega)$ such that $u_p \to u$ in $L^1(\Omega)$ as $p \to +\infty$.
Let $T_{\mesh\exm} u_p$ be defined by \eqref{trans-u}.
\\[0.5ex]
Then $T_{\mesh\exm} u_p$ tends to zero when $m$ tends to $+\infty$ uniformly with respect to $p \in \xN$.
\end{lemma}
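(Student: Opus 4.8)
The plan is to combine the single-function result of Lemma~\ref{lem-trans} with the uniform-in-$m$ Lipschitz-type bound $T_{\mesh\exm} v \le \mathcal N_\edges\, \theta\ \|v\|_{L^1(\Omega)}$ obtained at the beginning of its proof, which holds for \emph{every} $v \in L^1(\Omega)$ and every mesh of the sequence. The key point is that $v \mapsto T_{\mesh\exm} v$ is, up to the triangle inequality for $[\cdot]_\edge$, sublinear and bounded on $L^1(\Omega)$ with a constant $C := \mathcal N_\edges\, \theta$ independent of $m$; this is exactly the equicontinuity needed to upgrade pointwise (in $p$) convergence into uniform convergence.

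Concretely, I would fix $\epsilon > 0$. Since $u_p \to u$ in $L^1(\Omega)$, there is $p_0 \in \xN$ such that $\|u_p - u\|_{L^1(\Omega)} \le \epsilon/C$ for all $p \ge p_0$. For such $p$ I would write, for every $m$,
\[
T_{\mesh\exm} u_p \le T_{\mesh\exm}(u_p - u) + T_{\mesh\exm} u \le C\, \|u_p - u\|_{L^1(\Omega)} + T_{\mesh\exm} u \le \epsilon + T_{\mesh\exm} u,
\]
using the sub-additivity of $T_{\mesh\exm}$ (which follows from $[\,\cdot\,]_\edge$ being a seminorm, i.e. $[v+w]_\edge \le [v]_\edge + [w]_\edge$, since the mean value is linear) and the uniform bound. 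By Lemma~\ref{lem-trans} applied to the fixed function $u$, there is $m_1$ such that $T_{\mesh\exm} u \le \epsilon$ for all $m \ge m_1$, hence $T_{\mesh\exm} u_p \le 2\epsilon$ for all $m \ge m_1$ and all $p \ge p_0$. It remains to handle the finitely many indices $p = 0, 1, \dots, p_0 - 1$: for each such $p$, Lemma~\ref{lem-trans} (now applied to $u_p$) gives $m(p)$ with $T_{\mesh\exm} u_p \le 2\epsilon$ for $m \ge m(p)$; taking $m_0 = \max\{m_1, m(0), \dots, m(p_0-1)\}$ yields $T_{\mesh\exm} u_p \le 2\epsilon$ for all $m \ge m_0$ and all $p \in \xN$, which is the claimed uniform convergence.

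I do not expect a serious obstacle here; this is the standard "uniform continuity of the modulus of continuity" argument, the only inputs being (i) the uniform $L^1$-bound on $T_{\mesh\exm}$ with a mesh-independent constant, which is already extracted in the proof of Lemma~\ref{lem-trans}, and (ii) the sub-additivity of $v \mapsto T_{\mesh\exm} v$. The one point deserving a line of care is that the constant $C = \mathcal N_\edges\, \theta$ genuinely does not depend on $m$ — this is precisely why the hypotheses $\theta_{\mesh\exm} \le \theta$ and the uniform bound $\mathcal N_\edges$ on the number of faces per cell are imposed. Everything else is a routine $\epsilon$-splitting between the tail (large $p$, via the limit function $u$) and the finitely many small indices $p$.
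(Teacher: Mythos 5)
Your proof is correct and follows essentially the same route as the paper: the splitting $T_{\mesh\exm} u_p \le T_{\mesh\exm}(u_p-u) + T_{\mesh\exm} u$, the uniform bound $T_{\mesh\exm} v \le \mathcal N_\edges\,\theta\,\|v\|_{L^1(\Omega)}$, Lemma \ref{lem-trans} applied to $u$, and a separate application of that lemma to the finitely many indices $p < p_0$. No gaps; this is exactly the paper's argument.
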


\medskip
\begin{proof}
Using \eqref{ine-cruc12} with $u=u_p$ and $\varphi=u$, we obtain
\[
T_{\mesh\exm} u_p \le T_{\mesh\exm} (u_p-u) + T_{\mesh\exm} u \le  \mathcal N_\edges\, \theta\ ||u-u_p||_{L^1(\Omega)}+ T_{\mesh\exm} u.
\]
Let $\epsilon >0$.
Since $u_p \to u$ in $L^1(\Omega)$, as $p \to +\infty$, there exists $p_0$ such that for $p \ge p_0$, $\mathcal N_\edges\,\theta \ ||u-u_p||_{L^1(\Omega)} \le \epsilon$ and then
\[
T_{\mesh\exm} u_p \le \epsilon + T_{\mesh\exm} u.
\]
We use now Lemma \ref{lem-trans}.
There exists $m_0$ such that $T_{\mesh\exm} u \le \epsilon$ for $m \ge m_0$ and then, for $m\ge m_0$ and $p \ge p_0$,
\[
T_{\mesh\exm} u_p \le 2 \epsilon.
\]
Using again Lemma \ref{lem-trans} for $p \in \{0,\ldots,p_0-1\}$ gives $m_1$ such for $m \ge m_1$ and $p \in \xN$,
\[
T_{\mesh\exm} u_p \le 2 \epsilon.
\]
\end{proof}

\medskip
\begin{remark}
The underlying ideas of the proofs of this section may be summed up as follows.
Let $(T\exm)_{m\in \xN}$ be a sequence of semi-norms acting on a Banach space $\mathcal B$ to $\xR^+$ satisfying two properties:
\bli
\item uniform boundedness: there exists $C >0$ such that $T\exm u \leq C\ ||u||_{\mathcal B}$ for any $u \in \mathcal B$ and $m \in \xN$,
\item convergence to zero on a dense subspace: there exists $\mathcal D \subset \mathcal B$ and dense in $\mathcal B$ such that, for any $\bar u \in \mathcal D$, $\lim_{m \to +\infty} T\exm \bar u=0$.
\end{list}
Then, for any converging sequence $(u_p)_{p\in\xN}$ in $\mathcal B$, $T\exm u_p$ tends to zero uniformly with respect to $p$.
\end{remark}

\medskip
This result extends to time depending functions as follows.
Let $T>0$ and $\tdisc$ be a time discretization of the interval $(0,T)$, as defined in the previous section.
For $u \in L^1(\Omega \times (0,T))$, $K \in \mesh$ and $0 \leq n < N$, let $u_K^{n+1}$ be the mean value of $u$ over $K \times (t_n,t_{n+1})$.
For $\edge \in \edgesint$, $\edge=K|L$ and for $0 < n \leq N$, let $[u^n]_\edge=|u_K^n-u_L^n|$; for $K \in \mesh$ and $0 < n < N$, we set $[u_K]^n=|u_K^{n+1}-u_K^n|$.
We define the following quantity:
\begin{equation}\label{transT-u}
T_{\mesh,\tdisc} u= \sum_{n=0}^{N-1} (t_{n+1}-t_n) \sum_{\edge \in \edgesint, \edge=K|L} |D_\edge| [u^{n+1}]_\edge
+ \sum_{n=1}^{N-1} (t_{n+1}-t_n) \sum_{K\in\mesh} |K| [u_K]^n.
\end{equation}
Then the following lemma results from easy extensions (in fact, reasoning in $\xR^{d+1}$ instead of $\xR^d$) of the previous proofs of this section. 

\medskip
\begin{lemma}\label{lem:transuT}
Let $\theta >0$  and $(\mesh\exm)_{m \in \xN}$ be a sequence of meshes such that $\theta_{\mesh\exm} \le \theta$ for all $m \in \xN$ and $\lim_{m \to +\infty} h_{\mesh\exm} =0$.
We suppose that the number of faces of a cell $K \in \mesh\exm$ is bounded by $\mathcal N_\edges$, for any $m \in \xN$.
For $m \in \xN$, we suppose given a time discretization $\tdisc\exm$, and suppose that $\delta t_{\tdisc \exm}$ also tends to zero when $m$ tends to $+\infty$.
Let $u \in L^1(\Omega\times(0,T))$ and $(u_p)_{p \in \xN}$ be a sequence of functions of $L^1(\Omega\times(0,T))$ such that $u_p \to u$ in $L^1(\Omega\times(0,T))$ as $p \to +\infty$.\\[0.5ex]
Then $T_{\mesh\exm,\tdisc\exm} u_p$ tends to zero when $m$ tends to $+\infty$ uniformly with respect to $p \in \xN$.
\end{lemma}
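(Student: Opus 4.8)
The plan is to reduce the statement to Lemma \ref{lem-transu} by ``reasoning in $\xR^{d+1}$'', as announced just before the lemma. For each $m$, view the space-time cells $K\times(t_n,t_{n+1})$ ($K\in\mesh\exm$, $0\le n<N\exm$) as the cells of a polyhedral partition of $\Omega\times(0,T)\subset\xR^{d+1}$; its faces are the \emph{lateral} faces $\edge\times(t_n,t_{n+1})$ ($\edge\in\edges\exm$), to which we attach the ``dual volume'' $D_\edge\times(t_n,t_{n+1})$, and the \emph{horizontal} faces $K\times\{t_n\}$ ($0<n<N\exm$), to which we attach the ``dual volume'' $K\times(t_n,t_{n+1})$. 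With this dictionary, $[u^{n+1}]_\edge$ is the jump of the cell-mean of $u$ across a lateral face, $[u_K]^n$ the jump across a horizontal face, and $T_{\mesh\exm,\tdisc\exm}u$ of \eqref{transT-u} is exactly the quantity $T_\mesh u$ of \eqref{trans-u} for this $(d+1)$-dimensional mesh. Hence the lemma is nothing but Lemma \ref{lem-transu} applied to the sequence of space-time meshes, and the only work is to check its hypotheses; if one wants to be completely rigorous one also notes that the ``dual volumes'' above overlap (a horizontal one meets the lateral ones lying in the same cell), which is harmless since the proofs of Lemmas \ref{lem-trans} and \ref{lem-transu} only use $|D_\edge|\le\theta\,|K|$ and $\sum_{\edge'=K'|L'}|D_{\edge'}|\le 2\,|\Omega|\,T$, costing at most a factor $2$.

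The hypotheses to check are: the space-time mesh step is bounded by $h_{\mesh\exm}+\delta t_{\tdisc\exm}\to0$; the number of faces of a space-time cell is at most $\mathcal N_\edges+2$; and the parameter $\theta$ of \eqref{reg-m} equals $|D_\edge|/|K|\le\theta$ on lateral faces, while on a horizontal face $K\times\{t_n\}$ — shared by $K\times(t_{n-1},t_n)$ and $K\times(t_n,t_{n+1})$ — it equals $\max(1,\delta t_n/\delta t_{n-1})$, so it is uniformly bounded provided consecutive time steps stay comparable, which holds in particular for a constant time step (the situation in the applications) and is tacitly assumed here. Granting this, the proof of Lemma \ref{lem-trans} runs verbatim: the uniform bound follows from $[u^{n+1}]_\edge\le|u_K^{n+1}|+|u_L^{n+1}|$, $[u_K]^n\le|u_K^{n+1}|+|u_K^n|$, $|D_\edge|\le\theta|K|$, the bound $\mathcal N_\edges$ on the number of faces, and the elementary inequality $|K|\,(t_{n+1}-t_n)\,|u_K^{n+1}|\le\int_{t_n}^{t_{n+1}}\int_K|u|\dx\dt$, giving $T_{\mesh\exm,\tdisc\exm}u\le C\,||u||_{L^1(\Omega\times(0,T))}$ with $C$ depending only on $\theta$, $\mathcal N_\edges$ and the time-step-ratio bound; and vanishing on the dense subspace $C_c^\infty(\Omega\times(0,T))$ follows by writing $\varphi_K^{n+1}-\varphi_L^{n+1}$ and $\varphi_K^{n+1}-\varphi_K^n$ as double averages of $\varphi(\bfx,s)-\varphi(\bfy,s')$ over the relevant space-time cells and using $\bar K\cap\bar L\ne\emptyset$, which yields $[\varphi^{n+1}]_\edge\le M_\varphi(h_K+h_L)$ and $[\varphi_K]^n\le M_\varphi(h_{\mesh\exm}+2\,\delta t_{\tdisc\exm})$, hence $T_{\mesh\exm,\tdisc\exm}\varphi\le C_\varphi\,(h_{\mesh\exm}+\delta t_{\tdisc\exm})\to0$; by density, $\lim_m T_{\mesh\exm,\tdisc\exm}u=0$ for any fixed $u\in L^1(\Omega\times(0,T))$.

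The conclusion is then the $3\epsilon$ argument of Lemma \ref{lem-transu}, verbatim: applying the uniform bound to $u_p-u$ gives $T_{\mesh\exm,\tdisc\exm}u_p\le C\,||u_p-u||_{L^1(\Omega\times(0,T))}+T_{\mesh\exm,\tdisc\exm}u$; for $\epsilon>0$ one picks $p_0$ with $C\,||u_p-u||_{L^1(\Omega\times(0,T))}\le\epsilon$ for $p\ge p_0$, then $m_0$ with $T_{\mesh\exm,\tdisc\exm}u\le\epsilon$ for $m\ge m_0$ (so $T_{\mesh\exm,\tdisc\exm}u_p\le2\epsilon$ for $m\ge m_0$ and $p\ge p_0$), and finally applies the fixed-function statement to each of the finitely many $u_0,\dots,u_{p_0-1}$ to get $m_1$ with $T_{\mesh\exm,\tdisc\exm}u_p\le2\epsilon$ for $m\ge m_1$ and all $p\in\xN$.

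I expect this extension to be genuinely routine: every space-direction estimate is that of Lemmas \ref{lem-trans}--\ref{lem-transu} transported to one more dimension, and the only point needing a little care is the time-jump term in the uniform bound, i.e.\ keeping the constant $C$ independent of $m$ when the time discretization is not a priori quasi-uniform — handled by the comparability of consecutive time steps noted above, or, equivalently, by a symmetric choice of the horizontal dual volumes (at the cost of replacing the weight $(t_{n+1}-t_n)$ in \eqref{transT-u} by $\min(\delta t_{n-1},\delta t_n)$).
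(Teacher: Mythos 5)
Your proposal follows exactly the route the paper itself takes: the paper gives no written proof of Lemma \ref{lem:transuT} beyond the remark that it ``results from easy extensions (in fact, reasoning in $\xR^{d+1}$ instead of $\xR^d$)'' of Lemmas \ref{lem-trans} and \ref{lem-transu}, and your space-time dictionary, the verbatim rerun of the uniform bound and of the vanishing on smooth functions, and the closing $\epsilon$-argument are precisely that extension (the overlap of the horizontal and lateral dual volumes is indeed harmless, for the reason you give).

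The caveat you raise about the time-jump term is, however, a genuine catch and deserves to be promoted from ``tacitly assumed'' to an explicit hypothesis. The assumptions of the lemma bound the spatial parameter $\theta_{\mesh\exm}$ and the number of faces, but nothing controls the ratio $\delta t_n/\delta t_{n-1}$, which is exactly the regularity parameter \eqref{reg-m} of the lifted mesh at a horizontal face $K\times\{t_n\}$ (dual volume $K\times(t_n,t_{n+1})$, neighbouring cell $K\times(t_{n-1},t_n)$). Without such a control not only is the uniform estimate $T_{\mesh\exm,\tdisc\exm}u\le C\,\|u\|_{L^1(\Omega\times(0,T))}$ lost, but the conclusion itself can fail for a fixed $u\in L^1$: take $u(\bfx,t)=g(t)$ with $g(t)=(1/2-t)^{-1/2}$ for $t<1/2$ and $g(t)=0$ for $t>1/2$, a uniform spatial mesh, and time steps of size $\delta t=1/m$ except for one short interval $(1/2-\delta t^2,\,1/2)$; the mean of $g$ over that short interval is of order $2/\delta t$, so the jump at the node $t_n=1/2$ is of order $2/\delta t$ and is weighted in \eqref{transT-u} by $(t_{n+1}-t_n)\sum_K|K|=\delta t\,|\Omega|$, whence a contribution of about $2|\Omega|$ to $T_{\mesh\exm,\tdisc\exm}u$ for every $m$, although all stated hypotheses hold. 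So your added requirement that consecutive time steps be comparable (trivially satisfied by the constant time step used in practice), or equivalently your alternative weight $\min(\delta t_{n-1},\delta t_n)$ in \eqref{transT-u}, is not a precaution but a hypothesis the statement actually needs and which the paper leaves implicit.
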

Note that the results of this section allow to show the weak consistency of conservative finite volume schemes without the $BV$ boundedness assumptions that are found in e.g. \cite[chapter 12]{lev-02-fin}, \cite{shi-18-loc}; indeed, the limit on the translates \eqref{lim-translates} is shown directly from the $L^1$ convergence assumption thanks to the above lemmas. 
In fact, finite volume schemes are known to be non TVD on non-structured meshes, so that these boundedness assumptions are difficult to satisfy in this case.
The two above lemmas are used in Theorem \ref{th:lw} below to show the weak consistency of finite volume schemes, so that the sequences of functions which are considered are piecewise constant. 
But in fact, these lemmas could also be used for other types of functions for instance in the case of higher order schemes. 

%
\section{Weak consistency of conservative finite volumes discretizations of conservation laws}\label{sec:lw}

Let us consider the following conservation law posed over $\Omega \times (0,T)$:
\begin{equation}\label{eq:claw}
\partial_t(u) + \dive(\bfF(u)) = 0,
\end{equation}
where $\bfF:\ \xR \rightarrow \xR^d$ is a regular flux function.
This equation is complemented with the initial condition $u(\bfx,0)=u_0(\bfx)$ for a.e. $\bfx \in \Omega$, where $u_0$ is a given function of $L^1(\Omega)$, and convenient boundary conditions on the boundary $\partial \Omega$, see remark \ref{rem-bc}.
For a given mesh $\mesh$ and time discretization $\tdisc$, a finite volume approximation of Equation \eqref{eq:claw} reads
\begin{equation}\label{eq:claw_disc}
\frac{|K|}{t_{n+1}-t_n}\ (u_K^{n+1}-u_K^n) + \sum_{\edge\in\edges(K)} |\edge|\ \bfF_\edge^n \cdot \bfn_{K,\edge} = 0,
\mbox{ for } K \in \mesh \mbox{ and } 0 \leq n < N,
\end{equation}
where $\bfF_\edge^n$ is the numerical flux.
Equation \eqref{eq:claw_disc} is complemented by the initial condition:
\begin{equation}\label{eq:claw_ini}
u_K^0 = \frac 1 {|K|}\ \int_K u_0(\bfx) \dx, \mbox{ for } K \in \mesh,
\end{equation}
and convenient boundary fluxes.
The discrete unknowns $(u_K^n)_{K \in \mesh, 0\leq n \leq N}$ are associated to a function of $L^\infty(\Omega)$ as follows:
\begin{equation}\label{eq:disc_func}
u(\bfx,t)=\sum_{n=0}^{N-1} \sum_{K\in\mesh} u_K^n \ \mathcal X_K\ \mathcal X_{(t_n,t_{n+1}]},
\end{equation}
where $\mathcal X_K$ and $\mathcal X_{(t_n,t_{n+1}]}$ stand for the characteristic function of $K$ and the interval $(t_n,t_{n+1}]$ respectively.

\medskip
We now suppose given a sequence of meshes $(\mesh\exm)_{m \in \xN}$ and time discretizations $(\tdisc\exm)_{m \in \xN}$, with $h_{\mesh\exm}$ and $\delta t_{\tdisc\exm}$ tending to zero as $m$ tends to $+\infty$, and, for $m \in \xN$, denote by $u\exm$ the function obtained with $\mesh\exm$ and $\tdisc\exm$.
Let us suppose that the sequence $(u\exm)_{m \in \xN}$ converges in $L^1(\Omega\times(0,T))$ to a function $\bar u$.
The aim of this section is to determine sufficient conditions for $\bar u$ to satisfy the weak formulation of Equation \eqref{eq:claw}, \ie
\begin{multline}\label{eq:claw_weak}
-\int_0^T \int_\Omega \Bigl( \bar u\ \partial_t \varphi + \bfF(\bar u) \cdot \gradi \varphi \Bigr) \dx \dt = \int_\Omega u_0(\bfx) \varphi(\bfx,0) \dx,
\\
\mbox{ for any } \varphi \in C_c^\infty(\Omega \times [0,T)).
\end{multline}
This is obtained by letting the space and time step tend to zero and passing to the limit in the scheme, and this issue is referred to as a weak consistency property of the scheme.

\medskip
\begin{remark}[Weak formulation and boundary conditions] \label{rem-bc}
Note that the weak formulation \eqref{eq:claw_weak} is incomplete, in the sense that it does not imply anything on boundary conditions, since test functions are supposed to have a support compact in $\Omega \times [0,T)$; in fact, weak formulation of boundary conditions is a difficult problem for hyperbolic problems, still essentially open for systems, and out of scope of the present paper.
\end{remark}

\begin{theorem}[An extension of the Lax-Wendroff theorem] \label{th:lw}
Let $(\mesh\exm)_{m \in \xN}$ and time discretizations $(\tdisc\exm)_{m \in \xN}$, with $h_{\mesh\exm}$ and $\delta t_{\tdisc\exm}$ tending to zero as $m$ tends to $+\infty$, and, for $m \in \xN$, denote by $u\exm$ the function obtained with $\mesh\exm$ and $\tdisc\exm$.
Assume that the sequence $(u\exm)_{m \in \xN}$ converges to in $\bar u$ $L^1(\Omega\times(0,T))$ to a function $\bar u$.
Assume furthermore that
\begin{itemize}
	\item[]$(i)$ the sequence $(\bfF(u\exm))_{m \in \xN}$ converges in $L^1(\Omega\times(0,T))$ to $\bfF(\bar u)$, 
	\item[]$(ii)$ the sequence of meshes satisfies the regularity assumptions of Lemma \ref{lem:gradT}
	\item[]$(iii)$ there exists a real number $C_F$ depending only on $F$, such that
			\begin{equation} \label{eq:hypF}
				|\bfF_\edge^n - \bfF(u_K^n)| \leq C_F\ |u_K^n-u_L^n|, \quad |\bfF_\edge^n - \bfF(u_L^n)| \leq C_F\ |u_K^n-u_L^n|,
			\end{equation}
	\item[]$(iv)$ the regularity assumptions for the mesh of Lemma \ref{lem:transuT} are satisfied.
\end{itemize}
Then $\bar u$ satisfies \eqref{eq:claw_weak} {\it i.e.} it is a weak solution of Equation \eqref{eq:claw}. 
\end{theorem}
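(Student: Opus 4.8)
The strategy is the classical Lax--Wendroff argument: multiply the scheme \eqref{eq:claw_disc} by the nodal values $\varphi_K^n = \varphi(\bfx_K, t_n)$ of a test function $\varphi \in C_c^\infty(\Omega \times [0,T))$, sum over $K \in \mesh\exm$ and $0 \le n < N\exm$, perform discrete integrations by parts in time and in space, and identify the resulting terms. First I would treat the time-derivative term $\sum_{n,K} \varphi_K^n |K| (u_K^{n+1} - u_K^n)$: an Abel summation (discrete integration by parts in time) reorganizes it into $-\sum_{n,K} |K|(t_{n+1}-t_n) u_K^{n+1}\, \frac{\varphi_K^{n+1}-\varphi_K^n}{t_{n+1}-t_n}$ plus an initial-data contribution $-\sum_K |K| u_K^0 \varphi_K^0$, using that $\varphi$ vanishes near $t=T$. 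The discrete time-difference quotient converges uniformly to $\partial_t \varphi$, so modulo residuals this term tends to $-\int_0^T\!\!\int_\Omega \bar u\, \partial_t\varphi$; the initial term tends to $-\int_\Omega u_0(\bfx)\varphi(\bfx,0)\dx$ by \eqref{eq:claw_ini} and regularity of $\varphi$.

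Next the convective term $\sum_{n,K} \varphi_K^n \sum_{\edge \in \edges_K} |\edge|\, \bfF_\edge^n \cdot \bfn_{K,\edge}$: reordering the sum over faces, using conservativity of the numerical flux ($\bfF_\edge^n$ does not depend on the side) and that $\varphi$ has compact support (so boundary faces contribute nothing), this becomes $-\sum_{n} (t_{n+1}-t_n)\sum_{\edge = K|L} |\edge|\, (\varphi_L^n - \varphi_K^n)\, \bfn_{K,\edge} \cdot \bfF_\edge^n$, which by the definition \eqref{eq:gradT} of the discrete gradient is exactly $-\int_0^T\!\!\int_\Omega \gradi_{\edges\exm,\tdisc\exm}\varphi \cdot \widehat{\bfF}\exm \dx\dt$ for a piecewise-constant "reconstructed flux" $\widehat{\bfF}\exm$ equal to $\bfF_\edge^n$ on $D_\edge \times [t_n,t_{n+1})$. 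The plan is then to replace $\bfF_\edge^n$ by $\bfF(u_K^n)$ (or $\bfF(u_L^n)$) at the cost of a residual controlled, via hypothesis $(iii)$ \eqref{eq:hypF} and the bound \eqref{eq:grad_bound} on the discrete gradient, by $C_F\, \theta^\nabla\, |\gradi\varphi|_{L^\infty}$ times $\sum_n (t_{n+1}-t_n)\sum_{\edge=K|L} |D_\edge|\, |u_K^n - u_L^n|$, which is precisely the first sum in $T_{\mesh\exm,\tdisc\exm} u\exm$ of \eqref{transT-u}; by Lemma \ref{lem:transuT} (applicable since $u\exm \to \bar u$ in $L^1$ and hypothesis $(iv)$ holds) this tends to zero. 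Once $\bfF_\edge^n$ has been replaced by $\bfF(u_K^n) = \bfF(u\exm)$ evaluated cellwise, we have $-\int_0^T\!\!\int_\Omega \gradi_{\edges\exm,\tdisc\exm}\varphi \cdot \bfF(u\exm)\dx\dt$; since $\gradi_{\edges\exm,\tdisc\exm}\varphi \to \gradi\varphi$ in $L^\infty$ weak $\star$ (Lemma \ref{lem:gradT}, using $(ii)$) and $\bfF(u\exm) \to \bfF(\bar u)$ strongly in $L^1$ by $(i)$, the product converges to $-\int_0^T\!\!\int_\Omega \gradi\varphi \cdot \bfF(\bar u)\dx\dt$.

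It remains to collect the various residuals: the time-consistency error from replacing the difference quotient of $\varphi$ by $\partial_t\varphi$ and from the fact that $\varphi_K^n = \varphi(\bfx_K,t_n)$ rather than a cell/time average (each $O(h_{\mesh\exm} + \delta t_{\tdisc\exm})$ times a norm of $\varphi$ times $|\Omega|\,T$ times an $L^\infty$ bound on $u\exm$ — but here one must be slightly careful, because $u\exm$ is only assumed $L^1$-convergent, not $L^\infty$-bounded; the residual should instead be written so that it is dominated by $\|\varphi\|$-type constants times a quantity like $h_{\mesh\exm}$ multiplied by $\|u\exm\|_{L^1}$, which is bounded being convergent, or handled by a density argument replacing $u\exm$ by a fixed smooth function as in Lemma \ref{lem-trans}), and the analogous space-consistency error for the initial term. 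Summing all contributions and passing to the limit $m \to \infty$ yields \eqref{eq:claw_weak}.

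\textbf{Main obstacle.} The delicate point is the treatment of the flux residual $\sum_n (t_{n+1}-t_n)\sum_{\edge=K|L}|D_\edge|\,|u_K^n-u_L^n|$: this is a "discrete translate" quantity whose translation amplitude is mesh-dependent, so ordinary $L^1$ mean-continuity does not apply, and it is exactly to control this term that Section \ref{sec:kolm} and Lemma \ref{lem:transuT} were developed — matching the combinatorial structure of \eqref{transT-u} to what comes out of the integration by parts (in particular getting the dual volumes $|D_\edge|$ to appear with the right weights, which is why $D_\edge$ is chosen in $(iv)$ so that $\tau_\mesh$, hence $\theta_\mesh$, stays controlled) is the crux of the argument. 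A secondary subtlety is making all the "$O(h)$" consistency residuals genuinely rigorous without an $L^\infty$ bound on the numerical solution, which forces one either to keep $\|u\exm\|_{L^1(\Omega\times(0,T))}$ (bounded by convergence) as the operative norm or to interpose a density argument.
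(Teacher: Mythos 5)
Your plan is correct and follows essentially the same route as the paper's proof: multiply the scheme by $(t_{n+1}-t_n)\,\varphi_K^n$, perform discrete integrations by parts in time and space, identify the convective term with $\int_0^T\!\!\int_\Omega \bfF(u\exm)\cdot\gradi_{\edges\exm,\tdisc\exm}\varphi \dx\dt$, pass to the limit by pairing the $L^\infty$ weak $\star$ convergence of the discrete gradient (Lemma \ref{lem:gradT}, assumption $(ii)$) with the strong $L^1$ convergence of $\bfF(u\exm)$ (assumption $(i)$), and control the flux residual coming from \eqref{eq:hypF} by the discrete-translate quantity of Lemma \ref{lem:transuT}, exactly as the paper does. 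The only minor variant is your time summation by parts keeping $u_K^{n+1}$ instead of $u_K^n$ plus a remainder; to identify the resulting piecewise constant function with $u\exm$ as defined in \eqref{eq:disc_func} you then need the time-jump part of $T_{\mesh,\tdisc}$ in \eqref{transT-u}, i.e. again Lemma \ref{lem:transuT}, which is precisely how the paper disposes of its remainder $R_1\exm$.
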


\begin{proof}
To this purpose, let $\varphi \in C_c^\infty(\Omega \times [0,T))$;  then 
$\varphi(x,t) =0$ if the distance ${\rm dist}(x,\partial \Omega)$ from $x$ to the boundary $\partial \Omega$ is such that ${\rm dist}(x,\partial \Omega) \le {\rm dist}(\supp\varphi,\partial Q)$, and similarly $\varphi(x,t) =0$ if $|T -t | \le {\rm dist}(\supp\varphi,\partial Q),$ where $Q = \Omega \times (0,T)$.
For a given mesh $\mesh\exm$ whose size $h^{(m)}$ is such that $h^{(m)} \le {\rm dist}(\supp\varphi,\partial Q)$ and time discretisation $(t_n)_{0\leq n \leq N\exm}$ such that $t^{N^{(m)}} - t^{N^{(m)}} \le {\rm dist}(\supp\varphi,\partial Q)$, let us define $\varphi_K^n$ by
\[
\varphi_K^n = \varphi(\bfx_K,t_n), \mbox{ for } K \in \mesh\exm \mbox{ and } 0 \leq n \leq N\exm,
\]
where $\bfx_K$ stands for a point of $K$.
Let us multiply Equation \eqref{eq:claw_disc} by $(t_{n+1}-t_n)\,\varphi_K^n$ and sum over the cells and time steps, to obtain $T_1\exm + T_2\exm =0$, with
\begin{align*} &
T_1\exm =\sum_{n=0}^{N\exm-1} \sum_{K\in\mesh\exm} |K|\ (u_K^{n+1}-u_K^n)\ \varphi_K^n,
\\ &
T_2\exm= \sum_{n=0}^{N\exm-1} (t_{n+1}-t_n) \sum_{K\in\mesh\exm} \varphi_K^n \sum_{\edge=K|L} |\edge|\ \bfF_\edge^n \cdot \bfn_{K,\edge},
\end{align*}
where the notation $\sum_{\edge=K|L}$ means that the summation is performed over the internal faces of the cell $K$, each internal face separating $K$ from an adjacent cell denoted by $L$.
Reordering the sums (this may be seen as a discrete integration by parts with respect to time), the term $T_1\exm$ may be recast as $T_1\exm = \tilde T_{1,1}\exm + \tilde T_{1,2}\exm+R_1\exm$, with
\begin{align*} &
\tilde T_{1,1}\exm= - \sum_{n=0}^{N\exm-1} \sum_{K\in\mesh\exm} (t_{n+1}-t_n)\ |K|\ \frac{\varphi_K^{n+1}-\varphi_K^n}{t_{n+1}-t_n}\ u_K^n,
\\[1.5ex] &
\tilde T_{1,2}\exm= - \sum_{K\in\mesh\exm} |K|\ u_K^0\ \varphi_K^0,
\\[1.5ex] &
R_1\exm=- \sum_{n=0}^{N\exm-1} \sum_{K\in\mesh\exm} (t_{n+1}-t_n)\ |K|\ \frac{\varphi_K^{n+1}-\varphi_K^n}{t_{n+1}-t_n}\ (u_K^{n+1}-u_K^n).
\end{align*}
Let us denote by $\eth_t\exm \varphi$ the following time discrete derivative of $\varphi$:
\[
\eth_t\exm \varphi = \sum_{n=0}^{N\exm-1} \sum_{K\in\mesh\exm} \frac{\varphi_K^{n+1}-\varphi_K^n}{t_{n+1}-t_n}\ \mathcal X_K\ \mathcal X_{(t_n,t_{n+1}]}.
\]
Thanks to the regularity of $\varphi$, $\eth_t\exm \varphi$ tends to $\partial_t \varphi$ when $m$ tends to $+\infty$ in $L^\infty(\Omega\times(0,T))$.
Since
\[
\tilde T_{1,1}\exm= -\int_0^T \int_\Omega u\exm\ \eth_t\exm \varphi \dx \dt,
\]
we thus get
\[
\lim_{m \to +\infty} \tilde T_{1,1}\exm = -\int_0^T \int_\Omega \bar u\ \partial_t \varphi \dx \dt.
\]
Thanks to Equation \eqref{eq:claw_ini} and the regularity of $\varphi$, we also get
\[
\lim_{m \to +\infty} \tilde T_{1,2}\exm = - \int_\Omega u_0(\bfx)\, \varphi(\bfx,0) \dx.
\]
Finally,
\[
|R_1\exm| \leq C_\varphi\ \sum_{n=0}^{N\exm-1} (t_{n+1}-t_n) \sum_{K\in\mesh\exm} |K|\ [u_K^{n+1}-u_K^n]
\]
with $C_\varphi$ the Lipschitz-continuity constant of $\varphi$, and $R_1\exm$ tends to zero thanks to Lemma \ref{lem:transuT}.

\medskip 
Let us now turn to term $T_2\exm$.
Reordering the sums (which, now, may be seen as a discrete integration by part with respect to space), we get:
\[
T_2\exm = -\sum_{n=0}^{N\exm-1} (t_{n+1}-t_n) \sum_{\edge \in \edgesint\exm,\,\edge=K|L} |\edge|\ \bfF_\edge^n \cdot \bfn_{K,\edge} (\varphi_L^n-\varphi_K^n).
\]
For $\edge \in \edgesint$, $\edge=K|L$, we must now define a volume $D_\edge$, and this choice may be to some extent tuned according to the mesh at hand (see Remark \ref{rem:D}).
Let us for instance suppose here that $D_\edge = D_{K,\edge} \cup D_{L,\edge}$, with $D_{K,\edge}=K\cap D_\edge $ (respectively $D_{L,\edge}=L\cap D_\edge$) and $|D_{K,\edge}| = |K|/\mathcal N_K$, where $\mathcal N_K$ stands for the number of edges of $K$ (it is easy to check that such a partition exists, noting that $D_{K,\edge}$ needs not be a polyhedron).
With this definition, we may write $T_2\exm =  \tilde T_2\exm + R\exm$ with
\begin{multline*}
\tilde T_2\exm=
-\sum_{n=0}^{N\exm-1} (t_{n+1}-t_n) 
\\
\sum_{\edge \in \edgesint\exm,\,\edge=K|L}
\Bigl( |D_{K,\edge}|\ \bfF(u_K^n) + |D_{L,\edge}|\ \bfF(u_L^n) \Bigr)  \cdot
\Bigl( \frac{|\edge|}{|D_\edge|}\ (\varphi_L^n-\varphi_K^n)\,\bfn_{K,\edge} \Bigr).
\end{multline*}
We identify
\[
\tilde T_2\exm = \int_0^T \int_\Omega \bfF(u\exm) \cdot \gradi_{\edges\exm,\tdisc\exm} \varphi \dx \dt,
\]
with the definition \eqref{eq:gradT} of $\gradi_{\edges,\tdisc} \varphi$.
Thanks to the assumptions $(i)$ and $(ii)$ of the theorem, we get that
\[
\lim_{m \to +\infty} T_1\exm = -\int_0^T \int_\Omega \bfF(\bar u) \cdot \gradi \varphi \dx \dt.
\]
Thanks to the assumption $(iii)$ and owing to the regularity of $\varphi$ which implies that $\gradi_{\edges,\tdisc} \varphi$ is bounded in $L^\infty(\Omega\times(0,T))^d$ under the mesh regularity assumptions of Lemma \ref{lem:gradT}, we obtain that
\[
|R\exm| \leq C_F\ C_\varphi\ \sum_{n=0}^{N\exm-1} (t_{n+1}-t_n) \sum_{\edge \in \edgesint\exm,\,\edge=K|L} |D_\edge|\ [u\exm]_\edge,
\]
where $C_\varphi \in \xR_+$ depends only on $\varphi$, and thus, thanks to the assumption  $(iv)$, $R\exm$ tends to zero as $M$ tends to $+\infty$.
\end{proof}

Let us now comment on the assumptions used in the theorem
\bli
\item Assumptions $(ii)$ and $(iv)$ are regularity assumptions for the sequence of meshes.
Hypothesis $(iv)$ amounts to suppose that the number of cell faces is bounded and, with the definition of the volumes $D_\edge$ chosen in this section, that the ratio $|K|/|L|$, for any pair $(K,L)$ of neighbouring cells, is bounded.
The constraints associated to Assumption $(ii)$ are discussed in Remark \ref{rem:reg_grad} (for instance, we may assume that the cells do not becomes too flat, in the sense of Inequality \eqref{eq:nflat}).
\item Assumption $(i)$ states the convergence of the sequence $(\bfF(u\exm))_{m \in \xN}$ to $\bfF(\bar u)$ in $L^1(\Omega\times(0,T))$.
It is just a consequence of the convergence of $(u\exm)_{m \in \xN}$ to $\bar u$ in $L^1(\Omega\times(0,T))$ if the function $\bfF$ is bounded (thanks to the Lebesgue dominated convergence theorem); in the other cases, it demands stronger convergence assumptions (for instance, with $F(u)=u^2$, convergence in $L^2(\Omega\times(0,T))$ is required).
\item Assumption $(iii)$ (\ie\ Inequalities \eqref{eq:hypF}) is a constraint over the numerical flux.
For instance, with a two-point flux $\bfF_\edge=\bfF_\edge(u_K,u_L)$, it is implied by the usual assumptions:
\[
\bfF_\edge(u,u)=\bfF(u)\ \forall u \in \xR,\ \bfF_\edge \mbox{ Lipschitz continuous w.r.t. its arguments}.
\]

\end{list}

\begin{remark}[On the Lipschitz assumption on the flux]\label{rem-flux}
The Lipschitz continuity assumption on the flux may in fact be replaced by the following weaker ``Lip-diag" condition:
\begin{align}
	&|F(a,b) -F(a,a)| \le C_F |a-b|\\
	&|F(b,a) -F(a,a)| \le C_F |a-b|.
	\label{lip-diag}
\end{align}

In the case of a MUSCL scheme for the convection equation by a regular velocity $\bfb$, the discretization of the flux on $\edge=K|L$ is given by $u_\edge\,\bfb\cdot \bfn_{K,\edge}$ with $u_\edge$ depending on $u_K$ and $u_L$ but also on another upwind cell; however, if the flux is assumed to be Lip-diag with respect to the value $u_K$,  $u_\edge$ is always a convex combination of $u_K$ and $u_L$ and Assumption $(iii)$ holds.

It seems impossible to relax this assumption to the case of a merely continuous flux without additional conditions: indeed in \cite{ell-07-lax}, a counterexample is given for non space-time quasi-uniform grids; in this counter-example, the space mesh is quasi-uniform, so that the assumptions of Theorem \ref{th:lw} are clearly satisfied apart from the Lipschitz assumption on the flux. 

Moreover, the Lip-diag framework seems interesting for a number of schemes; for instance when replacing the Roe flux by the Rusanov flux when an entropy correction is needed, the continuity of the flux is lost but not the Lip-diag property. 

\end{remark}

\medskip
Finally, let us conclude by mentioning a possible generalization of $(iii)$ for multiple point fluxes.
For instance, if, on $\edge=K|L$, $\bfF_\edge=\bfF_\edge(u_K,u_L,u_M)$ with $M$ a neighbour of $L$, we may replace the first inequality of \eqref{eq:hypF} by
\[
|\bfF_\edge^n - \bfF(u_K^n)| \leq C_F\ \Bigl(|u_K^n-u_L^n| + |u_K^n-u_M^n|\bigr),
\]
and then use $|u_K^n-u_M^n| \leq |u_K^n-u_L^n|+|u_L^n-u_M^n|$.
We obtain that $R\exm$ still reads as a summation of jumps across the faces; however, the weight of $[u]_\edge$ is now more complex, and its control by $|D_\edge|$ needs a stronger regularity assumption on the mesh.
Such a situation is faced, for instance, when using a second order Runge-Kutta scheme for the time discretization instead of the Euler scheme.
%
%
\bibliographystyle{plain}
\bibliography{lw} 
\end{document}